\renewcommand{\PrintDOI}[1]{\href{http://dx.doi.org/\detokenize{#1}}{doi: \detokenize{#1}}%
	\IfEmptyBibField{pages}{, (to appear in print)}{}}
\theoremstyle{definition}
\newtheorem{theorem}{Theorem}[section]
\newtheorem{lemma}[theorem]{Lemma}
\newtheorem{proposition}[theorem]{Proposition}
\theoremstyle{definition}
\newtheorem{definition}[theorem]{Definition}
\newtheorem{example}[theorem]{Example}
\theoremstyle{remark}
\newtheorem{remark}[theorem]{Remark}
\numberwithin{equation}{section}
\numberwithin{equation}{section}
\title[Bataineh et al. Generating set]{Generating sets of Reidemeister moves of oriented singular links and quandles}
\author{Khaled Bataineh}
\address{Jordan University of Science and Technology, Irbid, Jordan }
\email{khaledb@just.edu.jo}
\author{Mohamed Elhamdadi}
\address{University of South Florida, Tampa, USA }
\email{emohamed@mail.usf.edu}
\author{Mustafa Hajij}
\address{University of South Florida, Tampa, USA}
\email{mhajij@usf.edu}
\author{William Youmans}
\address{University of South Florida, Tampa, USA }
\email{wyoumans@mail.usf.edu}
\date{}
\subjclass[2000]{Primary 57M25}
\keywords{Generating sets of Reidemeister moves, Quandles, Oriented singular knots}
\dedicatory{}
\begin{document}
\maketitle 
	
\begin{abstract}
We give a generating set of the generalized Reidemeister moves for oriented singular links.  
We then introduce an algebraic structure arising from the axiomatization of Reidemeister moves on oriented singular knots.  We give some examples, including some non-isomorphic families of such structures over non-abelian groups.   We show that the set of colorings of a singular knot by this new structure is an invariant of oriented singular knots and use it to distinguish some singular links. 
\end{abstract}

\tableofcontents


\bigskip

\section{Introduction}
The discovery of the Jones polynomial of links \cite{Jones} generated a search which uncovered vast families of invariants of knots and links, among them the Vassiliev knot invariants \cite{Vassiliev}. The Jones polynomial and its relatives can be computed combinatorially using knot diagrams or their braid representations. A fundamental relationship between the Jones polynomial and Vassiliev invariants was established in the work of Birman and Lin \cite{birmanlin}, where they showed that Vassiliev invariants can be characterized by three axioms. Instead of focusing on a given knot, Vassiliev changed the classical approach by deciding to study the space of all knots instead. 
As a result of this work, Vassiliev  generated the theory of singular knots and their invariants and has gained considerable attention since then. 

Singular knots and their invariants have proven to be important subjects of study on their own, and many classical knot invariants have been successfully extended to singular knots. For example,  the work of Fiedler \cite{Fiedler} where the author extended the Jones and Alexander polynomials to singular knots. The colored Jones polynomial was generalized to singular knots in \cite{BEH}. Jones-type invariants
for singular links were constructed  using a Markov trace on a version of Hecke algebras in \cite{JMA}.

The main purpose of this paper is to relate the theory of quandles to the theory of singular knots. In \cite{CEHN}, the authors developed a type of involutory quandle structure called \textit{singquandles} to study non-oriented singular knots. This article solves the last open question given in \cite{CEHN} by introducing certain algebraic structures with the intent of applying them to the case of oriented singular knots.  We call these structures \textit{oriented singquandles}. We provide multiple examples of such this new algebraic structure. Finally, for the purpose of constructing the axioms of singquandles, it was necessary to construct a generating set of Reidemeister moves acting on oriented singular links, which can be found in Section \ref{reidemeistermoves}.\\
\\
\noindent
{\bf Organization.} This article is organized as follows. In Section \ref{2} we review some of the basic theory of quandles. In Section \ref{3} we introduce the notion of oriented singquandles. In Section \ref{4} we focus on oriented singquandles whose underlying structures rely on group structures and provide some applications of singquandles to singular knot theory. Finally, in Section \ref{reidemeistermoves} we detail a generating set of Reidemeister moves for oriented singular knots.  

\section{Basics of quandles}
\label{2}

Before we introduce any algebraic structures related to singular oriented links, we will need to recall the definition of a quandle and give a few examples. For a more detailed exposition on quandles, see \cites{EN, Joyce, Matveev}.
\begin{definition}\label{Qdle}
A {\it quandle} is a set $X$ with a binary operation $(a, b) \mapsto a * b$
such that the following axioms hold:

(1) For any $a \in X$,
$a* a =a$.

(2) For any $a,b \in X$, there is a unique $x \in X$ such that 
$a= x*b$.

(3) 
For any $a,b,c \in X$, we have
$ (a*b)*c=(a*c)*(b*c). $
\end{definition}
Axiom (2) of Definition~\ref{Qdle} states that for each $y \in X$, the map $*_y:X \rightarrow X$ with $*_y(x):=x*y$ is a bijection.  Its inverse will be denoted by the mapping $\overline{*}_y:X \rightarrow X$ with $\overline{*}_y(x)=x \ \overline{*}\ y$, so that $(x*y)\ \overline{*}\ y=x=(x\ \overline{*}\ y)*y.$ Below we provide some typical examples of quandles.


\begin{itemize}
\item
Any set $X$ with the operation $x*y=x$ for any $x,y \in X$ is
a quandle called the {\it trivial} quandle.

\item
A group $X=G$ with
$n$-fold conjugation
as the quandle operation: $x*y=y^{-n} x y^n$.

\item
Let $n$ be a positive integer.
For elements  
$x, y \in \mathbb{Z}_n$ (integers modulo $n$), 
define
$x \ast y = 2y-x \pmod{n}$.
Then $\ast$ defines a quandle
structure  called the {\it dihedral quandle},
  $R_n$.

\item
For any ${\mathbb{Z}}[T, T^{-1}]$-module $M$,
$x*y=Tx+(1-T)x$ where $x,y \in M$ defines an {\it  Alexander  quandle}.

\item
Let $<\;,\;>:\mathbb{R}^n \times \mathbb{R}^n \rightarrow \mathbb{R}^n$ be a symmetric bilinear form on $\mathbb{R}^n$.  Let $X$ be the subset of $\mathbb{R}^n$ consisting of vectors $x$ such that $<x,x>\ \neq 0$.  Then the operation $$x*y=\frac{2<x,y>}{<x,x>}y-x$$ defines a quandle structure on $X$.  Note that, $x*y$ is the image of $x$ under the reflection in $y$.  This quandle is called a {\it Coxeter} quandle. 
\end{itemize}
\noindent
A function $\phi: (X, *) \rightarrow  (Y,\diamond)$ is a quandle {\it homomorphism}
if $\phi(a \ast b) = \phi(a) \diamond \phi(b)$ 
for any $a, b \in X$. A bijective quandle endomorphism of $(X,*)$ is called a quandle isomorphism.  For example any map $f_{a,b}:(\mathbb{Z}_n,*) \rightarrow (\mathbb{Z}_n, *)$ defined by $f_{a,b}(x)=ax+b$ with $a, b \in \mathbb{Z}_n$ and $a$ invertible in $\mathbb{Z}_n$ is a quandle isomorphism, where $x*y=2y-x$ (see \cite{EMR} for more details). 

\section{Oriented singular knots and quandles}\label{sec3}
\label{3}
Recall that a singular link in $S^3$ is the image of a smooth immersion of $n$ circles in $S^3$ that
has finitely many double points, called singular points. An orientation of each circle induces orientation on each component of the link. This gives an oriented singular link. In this paper we will assume that any singular link is oriented, unless specified otherwise. Furthermore, we will work with singular link projections, or diagrams, which are projections of the singular link to the plane
such  that the information at each crossing
is preserved by leaving a little break in the lower strand. Two oriented singular link diagrams are considered equivalent if and only if one can obtain one from the other by a finite sequence of singular Reidemeister moves (see Figure \ref{generatingset1123}).

In the case of classical knot theory, the axiomatization of the Reidemeister moves gives rise to the definition of a quandle. One of the goals of this paper is to generalize the structure of quandles by considering singular oriented knots and links. We will call this structure an {\it oriented singquandle} (see the definition below). The axioms of oriented singquandles will be constructed using a generating set of Reidemeister moves on oriented singular links (see Figure \ref{generatingset1123}).
 A semiarc in a singular link diagram $L$ is an edge between vertices in the link $L$ considered as a $4$-valent graph. 
 
The oriented singquandle axioms are obtained by associating elements of the oriented singquandle to semiarcs in an oriented singular link
diagram and letting these elements act on each other at crossings as shown in the following figure:

\begin{figure}[H]
	\begin{center}
	\includegraphics[scale=0.65]{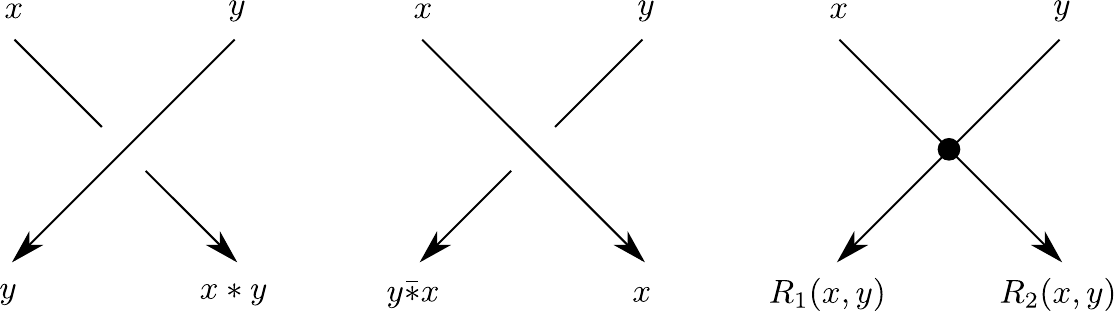}
	\end{center}
    \vspace{0.2in}
    \caption{Regular and singular crossings}
	\label{Rmoves}
\end{figure}

Now the goal is to derive the axioms that the binary operators $R_1$ and $R_2$ in Figure \ref{Rmoves} should satisfy. For this purpose, we begin with the generating set of Reidemeister moves given in Figure \ref{generatingset1123}. The proof that this is a generating set will be postponed to Section \ref{reidemeistermoves}. Using this set of Reidemeister moves, the singquandle axioms can be derived easily. This can be seen in Figures \ref{The generalized Reidemeister move RIV}, \ref{The generalized Reidemeister move RIVb} and \ref{The generalized Reidemeister move RV}.

\begin{figure}[H] 
	\begin{center}
    \includegraphics[scale=0.55]{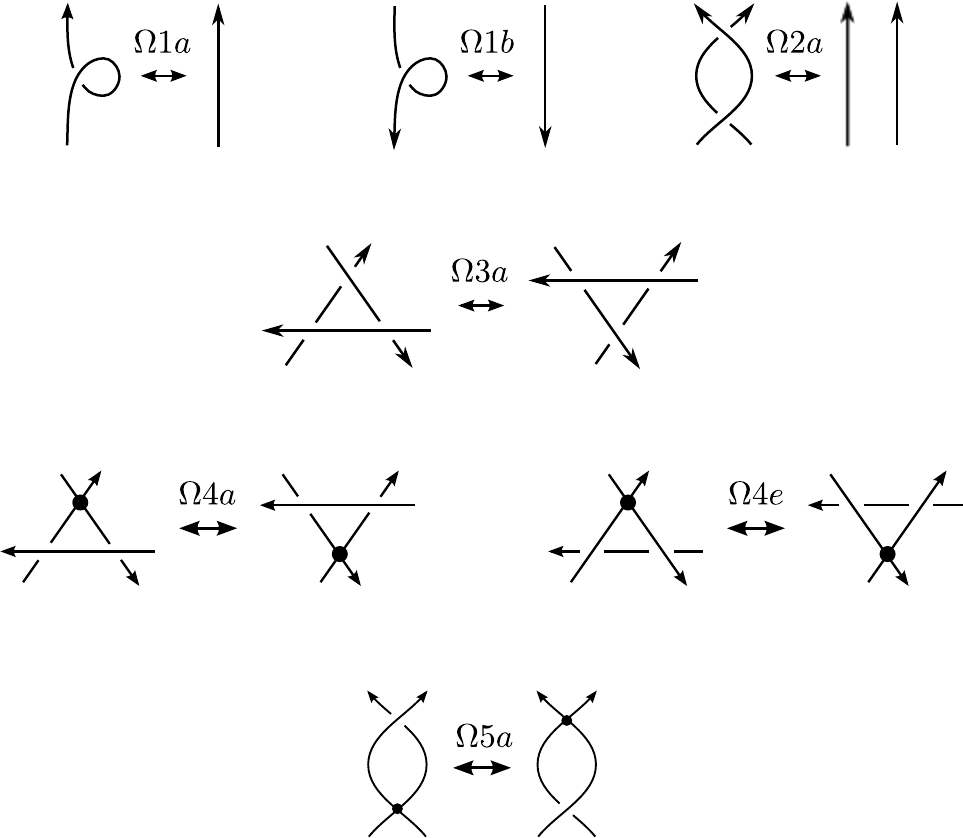}
	\vspace{.2in}
    \caption{A generating set of singular Reidemeister moves}
    \label{generatingset1123}
    \end{center}
\end{figure}

\begin{figure}[H]
	\tiny{
		\centering
		{\includegraphics[scale=0.55]{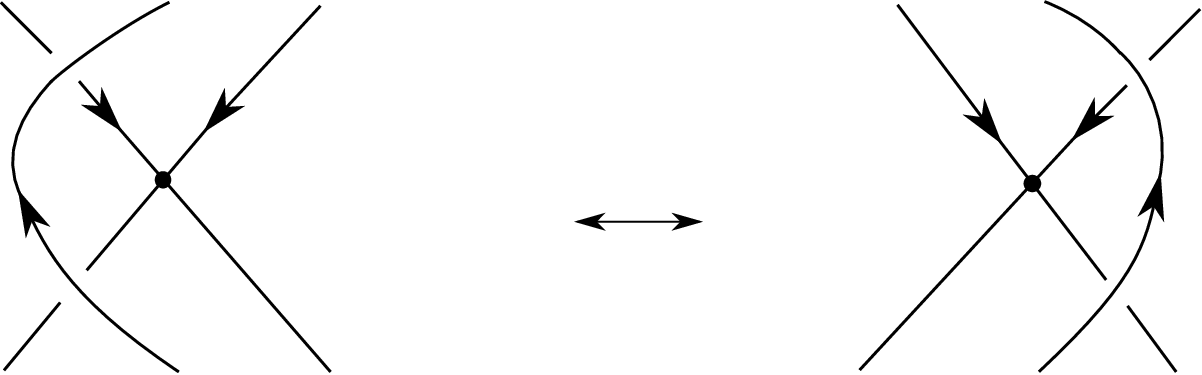}
			\put(-315,108){$x$}
			\put(-273,108){$y$}
			\put(-288,70){$x\bar{*}y$}
			\put(-344,60){$y$}
			\put(5,60){$y$}
			\put(-359,-7){$R_1(x\bar{*}y,z)*y$}
			\put(-270,-7){$y$}
			\put(-235,-7){$R_2(x\bar{*}y,z)$}
			\put(-125,-7){$R_1(x,z*y)$}
			\put(-55,-7){$y$}
			\put(-5,-7){$R_2(x,z*y)\bar{*}y$}
			\put(-238,108){$z$}
			\put(-83,108){$x$}
			\put(-38,108){$y$}
			\put(-5,108){$z$}
		}
		\vspace{.2in}
		\caption{The Reidemeister move $\Omega 4a$ and colorings} 
		\label{The generalized Reidemeister move RIV}}
\end{figure}

\begin{figure}[H]
	\tiny{
		\centering
		{\includegraphics[scale=0.55]{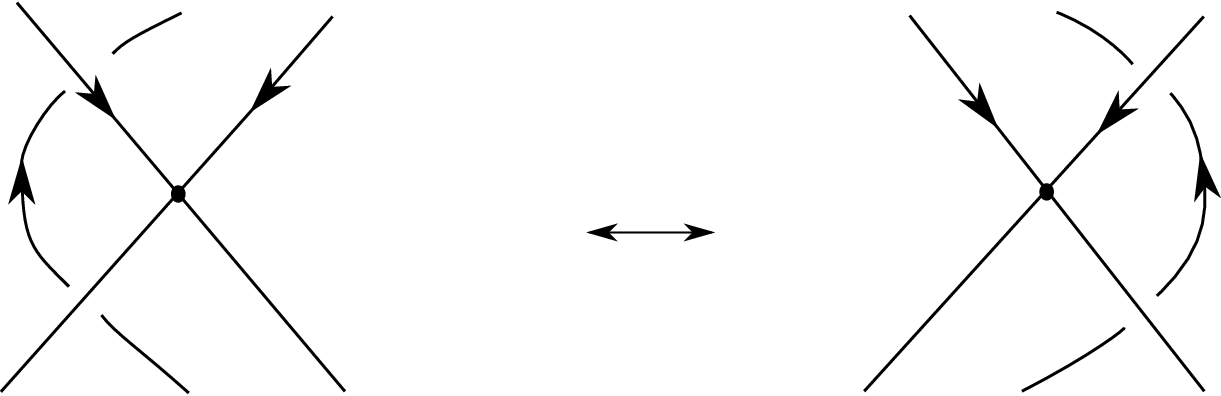}
			\put(-315,108){$x$}
			\put(-300,108){$(y\bar{*}R_1(x,z))*x$}
			\put(-288,70){$x$}
			\put(-364,60){$y\bar{*}R_1(x,z)$}
			\put(5,60){$y*R_2(x,z)$}
			\put(-359,-7){$R_1(x,z)$}
			\put(-280,-7){$y$}
			\put(-235,-7){$R_2(x,z)$}
			\put(-125,-7){$R_1(x,z)$}
			\put(-40,-7){$y$}
			\put(-5,-7){$R_2(x,z)$}
			\put(-228,108){$z$}
			\put(-83,108){$x$}
			\put(-68,108){$(y*R_2(x,z))\bar{*}z$}
			\put(5,108){$z$}
		}
		\vspace{.2in}
		\caption{The Reidemeister move $\Omega 4e$ and colorings }
		\label{The generalized Reidemeister move RIVb}}
\end{figure}

\begin{figure}[H]
\tiny{
  \centering
   {\includegraphics[scale=0.39]{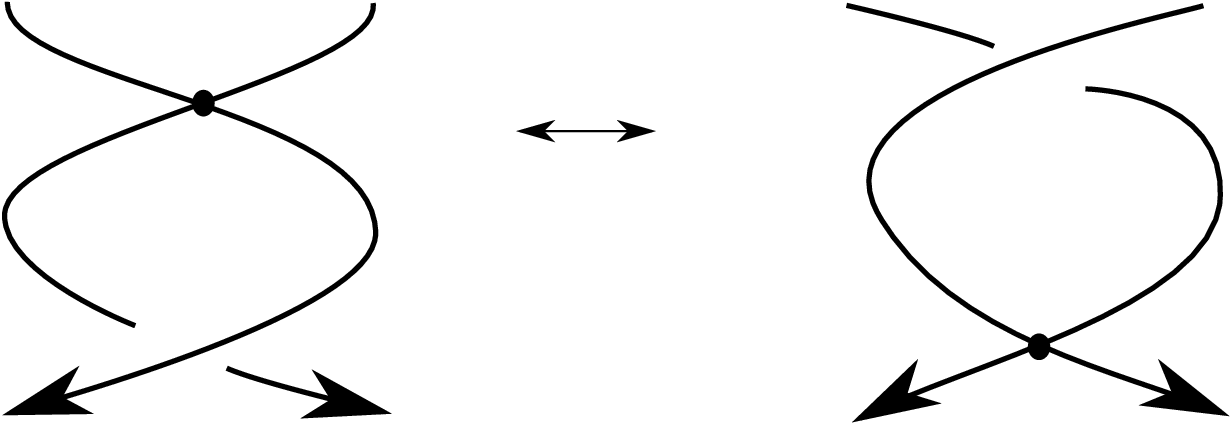}
	\put(-163,85){$y$}
    \put(-228,85){$x$}    
	\put(-3,-8){$R_2(y,x*y)$}
	 \put(-269,-8){$R_2(x,y)$}  
    \put(-63,85){$x$} 
    \put(-3,85){$y$}   
    \put(-73,-8){$R_1(y,x*y)$}
    \put(-178,-8){$R_1(x,y)*R_2(x,y)$}}
     \vspace{.2in}
     \caption{The Reidemeister move $\Omega 5a$ and colorings}
     \label{The generalized Reidemeister move RV}}
\end{figure}

The previous three figures immediately give us the following definition.
\begin{definition}\label{oriented SingQdle}
	Let $(X, *)$ be a quandle.  Let $R_1$ and $R_2$ be two maps from $X \times X$ to $X$.  The triple $(X, *, R_1, R_2)$ is called an {\it oriented singquandle} if the following axioms are satisfied:
	\begin{eqnarray}
		R_1(x\bar{*}y,z)*y&=&R_1(x,z*y) \;\;\;\text{coming from $\Omega$4a} \label{eq1}\\
		R_2(x\bar{*}y, z) & =&  R_2(x,z*y)\bar{*}y \;\;\;\text{coming from $\Omega$4a}\label{eq2}\\
	      (y\bar{*}R_1(x,z))*x   &=& (y*R_2(x,z))\bar{*}z \;\;\;\text{coming from $\Omega$4e } \label{eq3}\\
R_2(x,y)&=&R_1(y,x*y)  \;\;\;\text{coming from $\Omega$5a} \label{eq4}\\
R_1(x,y)*R_2(x,y)&=&R_2(y,x*y)  \;\;\;\text{coming from $\Omega$5a} \label{eq5}	
\end{eqnarray}	
\end{definition}
We give the following examples.
\begin{example}\label{generalizedaffinesingquandle}
Let $X=\mathbb{Z}_n$ with the quandle operation $x*y = ax+(1-a)y$, where $a$ is invertible so that $x\ \bar{*}\ y = a^{-1}x+(1-a^{-1})y$.  Now let $R_1(x,y) = bx+cy$, then by axiom~(\ref{eq4}) we have $R_2(x,y) = acx + (c(1-a) + b)y$. By substituting these expressions into axiom~(\ref{eq1}) we can find the relation $c= 1 - b$. Substituting, we find that the following is an oriented singquandle for any invertible $a$ and any $b$ in $\mathbb{Z}_n$:
	\begin{eqnarray}
    	x*y &=& ax + (1-a)y \\
        R_1(x,y) &=& bx + (1 - b)y \\
        R_2(x,y) &=& a(1 - b)x + (1 - a(1 - b))y
    \end{eqnarray}
\end{example}

It is worth noting that all of the above relations between constants can be derived from axiom 2.3, and the other axioms provide only trivial identities or the same relations. In this way, we can extend this generalized affine singquandle to the nonoriented case as well by allowing $x\ \bar{*}\ y = x*y$, since axiom 2.3 will reduce to it's counterpart in the axioms of nonoriented singquandles. (See axiom 4.1 given in \cite{CEHN}).

With this observation we can generalize the class of involutive Alexander quandles into a class of singquandles, which is given in the following example. 

\begin{example}
Let $\Lambda=\mathbb{Z}[t^{\pm 1},v]$  and let $X$ be a $\Lambda$-module. Then
the operations 
\[x\ast y=tx+(1-t)y, \quad R_1(x,y)=\alpha(a,b,c)x+(1-\alpha(a,b,c))y \quad \mathrm{and}\quad 
\]
\[R_2(x,y)=t[1  - \alpha(a,b,c)] x + [1 -t(1-  \alpha(a,b,c))] y \]
where $\alpha(a,b,c)=at+bv+ctv$, make $X$ an oriented singquandle which we call an \textit{Alexander oriented singquandle}. The fact that $X$ is an oriented singquandle follows from Example \ref{generalizedaffinesingquandle} by straightforward substitution. 
\end{example}
\begin{definition}
A \textit{coloring} of an oriented singular link $L$ is a function $C : R \longrightarrow X$, where $X$
is a fixed oriented singquandle and $R$ is the set of semiarcs in a fixed diagram of $L$, satisfying the conditions given in Figure \ref{Rmoves}. 
\end{definition}

Now the following lemma is immediate from Definition \ref{oriented SingQdle}.

\begin{lemma}
	The set of colorings of a singular link by an oriented singquandle is an invariant of oriented singular links.
\end{lemma}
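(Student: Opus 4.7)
The plan is to check invariance move by move, using the generating set of singular Reidemeister moves established in Section \ref{reidemeistermoves} (Figure \ref{generatingset1123}). Because that set is generating, it suffices to exhibit, for each move $M$ in the list, a bijection between colorings of the two sides of $M$ that agree on the unchanged portion of the diagram (the ``boundary'' semiarcs entering and exiting the local tangle). This will imply that the cardinality, and in fact the whole set, of colorings of any diagram is unchanged under the equivalence relation generated by the moves, hence is an invariant of the underlying oriented singular knot.

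First I would handle the classical Reidemeister moves $\Omega 1$, $\Omega 2$, $\Omega 3$ on the regular (non-singular) crossings. These are standard: axiom (1) of Definition~\ref{Qdle} handles $\Omega 1$, the invertibility axiom (2) (equivalently the existence of $\bar{*}$) handles $\Omega 2$, and the self-distributivity axiom (3) handles $\Omega 3$. In each case one assigns labels to the strands of one side of the move, uses the coloring rule at each crossing to read off labels on the other side, and verifies that the quandle axioms force the outgoing labels to match. Since the whole classical story is well known (\emph{cf.}\ \cite{EN, Joyce}), I would state this briefly rather than redo the checks.

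Next I would turn to the singular moves $\Omega 4a$, $\Omega 4e$, $\Omega 5a$, which is where the new content lies. The figures \ref{The generalized Reidemeister move RIV}, \ref{The generalized Reidemeister move RIVb}, \ref{The generalized Reidemeister move RV} already exhibit the colorings of both sides of each move in terms of fixed input labels $x,y,z$ and the operators $*$, $\bar{*}$, $R_1$, $R_2$. For each move, I would compare the outgoing labels on the two sides: the equalities that must hold are precisely the axioms (\ref{eq1})--(\ref{eq5}) in Definition~\ref{oriented SingQdle}. More concretely: for $\Omega 4a$ the outgoing labels on the two sides agree iff equations (\ref{eq1}) and (\ref{eq2}) hold; for $\Omega 4e$ they agree iff equation (\ref{eq3}) holds; for $\Omega 5a$ they agree iff equations (\ref{eq4}) and (\ref{eq5}) hold. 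Because every interior label on each side is determined by the three inputs $x,y,z$ via the coloring rule, this comparison \emph{is} the bijection on colorings.

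The only real subtlety, and hence the main obstacle, is making sure the correspondence is genuinely a \emph{bijection} rather than just a map in one direction. For this I would observe that on each side of each move the interior semiarcs are determined by the boundary inputs through a composition of the operations $*$, $\bar{*}$, $R_1$, $R_2$, all of which are well-defined functions (with $*$ invertible in its first argument by the quandle axiom). Thus a coloring of one side is determined by the labels of the boundary semiarcs, and conversely the boundary data extends uniquely to each side. Combined with the equalities above, this shows that colorings on the two sides are in canonical bijection, completing the proof that the coloring set is a singular-knot invariant.
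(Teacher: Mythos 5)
Your argument is correct and is exactly the route the paper intends: the axioms of Definition~\ref{oriented SingQdle} were extracted precisely from the colorings of the generating moves in Figures \ref{The generalized Reidemeister move RIV}--\ref{The generalized Reidemeister move RV}, so the paper simply declares the lemma ``immediate'' from the definition, while you have spelled out the standard move-by-move bijection (classical moves via the quandle axioms, singular moves via axioms (\ref{eq1})--(\ref{eq5})). No gaps; your version is just a more explicit write-up of the same proof.
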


 The set of colorings of a singular link $L$ by an oriented singquandle $X$ will be denoted by $Col_X(L)$.  As in the usual context of quandles, the notions of oriented singquandle homomorphisms and isomorphisms are immediate.

\begin{definition}
Let $(X,*,R_1,R_2)$ and $(Y,\triangleright,S_1,S_2)$ be two oriented singquandles. A map $f:X\longrightarrow Y$ is homomorphism if the following axioms are satisfied :
\begin{enumerate}
\item $f(x*y)=f(x)\triangleright f(y)$,
\item $f(R_1(x,y))=S_1(f(x),f(y))$,
\item $f(R_2(x,y))=S_2(f(x),f(y))$.
\end{enumerate}
If in addition $f$ is a bijection, then we say that it is an isomorphism, and we say that $(X,*,R_1,R_2)$ and $(Y,\triangleright,S_1,S_2)$ are isomorphic. Note that isomorphic oriented singquandles will induce the same set of colorings.
\end{definition}

\section{Oriented singquandles over groups}
\label{4}
When the underlying set of an oriented singquandle $X$ is a group, one obtains a rich family of structures. In this section we give a variety of examples including a generalization of affine oriented singquandles, as well as an infinite family of non-isomophic singquandles over groups.
 
\begin{example}
\label{first example}
Let $X=G$ be an abelian additive group, with $f$ being a group automorphism and $g$ a group endomorphism. Consider the operations $x*y = f(x) + y - f(y)$ and $R_1(x,y) = g(y) + x - g(x)$. First, note that the inverse operation of $*$ is given by $x\overline{*}y=f^{-1}(x)+y-f^{-1}(y) $.  Now We can deduce from axiom~(\ref{eq4}) that $R_2(x,y) = g(f(x)) + y - g(f(y))$. Substituting into the axioms, we find that the axioms are satisfied when $(f \circ g)(x) = (g \circ f)(x)$.  Example~\ref{generalizedaffinesingquandle} is a particular case of this structure with $f(x) = ax$ and $g(x) = (1-b)x$.
\end{example}

\begin{example}
	Let $X=G$ be a non-abelian multiplicative group with the quandle operation $x*y=y^{-1}xy$.  Then a direct computation gives the fact that $(X, *, R_1, R_2)$ is a singquandle if and only if $R_1$ and $R_2$ satisfy the following equations: 
	\begin{eqnarray}
		y^{-1} R_1(yxy^{-1}, z)y  &=& R_1(x, y^{-1}zy ) \\
		R_2(yxy^{-1}, z)                     & =&  y R_2(x, y^{-1}zy) y^{-1}\\
		x^{-1} R_1(x,z) y [R_1(x,z)]^{-1} x   &=& z [R_2(x, z)]^{-1}	y [R_2(x, z)] z^{-1}\\
		R_2(x, y) &=& R_1(y, y^{-1}xy)\\
       \left[ R_2(x, y) \right]^{-1} R_1(x,y) R_2(x,y) &=& R_2(y,y^{-1}xy)
	\end{eqnarray} 
\label{first example}
	A straightforward computation gives the following solutions, for all $x, y \in G$.
	\begin{enumerate}
    		\item  $R_1(x,y)=x$ and $R_2(x, y)=y$.
        \item  $R_1(x,y)=xyxy^{-1}x^{-1}$ and $R_2(x, y)=xyx^{-1}$.
        \item  $R_1(x,y)=y^{-1}xy$ and $R_2(x, y)=y^{-1}x^{-1}yxy$.
          \item  $R_1(x,y)=xy^{-1}x^{-1}yx,$ and $R_2(x, y)=x^{-1}y^{-1}xy^2$.
\item  $R_1(x,y)=y(x^{-1}y)^n$ and $R_2(x, y)=(y^{-1}x)^{n+1}y$, where $n \geq 1$.

\end{enumerate}
 \end{example}   


Next we focus our attention on a subset of some infinite families of oriented singquandle structures in order to show that some in fact are not isomorphic.
\begin{proposition}
\label{ppp}
Let $X=G$ be a non-abelian group with the binary operation $x*y=y^{-1}xy$.  Then, for $n \geq 1$, the following maps $R_1$ and $R_2$ yield an oriented singquandles structures $(X, *, R_1, R_2)$ on $G$:
\begin{enumerate}
\item
$R_1(x,y)=x(xy^{-1})^{n}$
 and $R_2(x,y)=y(x^{-1}y)^n$,
\item
$R_1(x,y)=(xy^{-1})^nx$ and 
$R_2(x,y)=(x^{-1}y)^ny,$

\item
$R_1(x,y)=x(yx^{-1})^{n+1}$ and 
$R_2(x,y)=x(y^{-1}x)^n.$

\end{enumerate}
Furthermore, in each of the cases (1), (2) and (3), different values of $n$ give also non-isomorphic singquandles.
\end{proposition}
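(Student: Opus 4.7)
The plan is to split the proposition into two tasks: verify that each pair $(R_1, R_2)$ defines an oriented singquandle on $G$ with the conjugation operation $x*y = y^{-1}xy$ (so that $x\bar{*}y = yxy^{-1}$), and then establish the non-isomorphism claims both across families and across distinct values of $n$ within one family.

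For the verification, each of the five axioms (\ref{eq1})--(\ref{eq5}) reduces to a group identity about powers of elements of the form $(xy^{-1})^n$ or $(yx^{-1})^n$. Two elementary manipulations drive all the computations: the conjugation telescoping $g\, w^n g^{-1} = (g w g^{-1})^n$ and the re-bracketing $(xy^{-1})^k y = x(y^{-1}x)^{k-1}$ that rewrites a left-type product as a right-type one. Axiom (\ref{eq4}) is essentially a definitional check once these are in hand; axioms (\ref{eq1}) and (\ref{eq2}) reduce to a single telescoping; and axioms (\ref{eq3}) and (\ref{eq5}) combine both identities. The analogous computation for families (2) and (3) follows the same template, differing only in bookkeeping about where the isolated factor of $x$ or $y$ sits relative to the bracketed power.

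For the non-isomorphism part, I would exploit the constraints imposed on a bijection $\phi:G\to G$ that preserves both conjugation and the operations $R_1, R_2$. Setting $x = 1$ in $\phi(y^{-1}xy) = \phi(y)^{-1}\phi(x)\phi(y)$ forces $e := \phi(1)$ to be central in $G$. To separate family (3) from families (1) and (2) at the same $n$, I would compare $R_1(x,1) = x^{n+1}$ and $R_1(1,y) = y^{-n}$ in families (1)--(2) against the swapped pair $R_1(x,1) = x^{-n}$ and $R_1(1,y) = y^{n+1}$ in family (3). Transporting these identities through $\phi$ yields relations of the form $\phi(x^{n+1}) = e^{n+1}\phi(x)^{-n}$ that cannot hold simultaneously on a generating pair, producing a contradiction in any non-abelian $G$ by choosing $x$ whose order is incompatible with $2n+1$. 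To separate (1) from (2), which agree whenever $x$ and $y$ commute, the cleanest move is to observe that $R_1(x,y)\, x^{-1}$ simplifies to $(xy^{-1})^n$ in family (2) but equals the nontrivial conjugate $x(xy^{-1})^n x^{-1}$ in family (1); equivalently, the identity $[R_1(x,y)\, x^{-1},\, x] = 1$ holds universally in family (2) and fails in family (1) on any pair of non-commuting elements. Since this equation is phrased entirely in the oriented singquandle operations, it is an isomorphism invariant.

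For distinct values of $n$ within the same family, a convenient invariant is the canonical subset $S_n := \{(x,y)\in G\times G : R_1(x,y) = x\}$, whose cardinality is preserved under any oriented singquandle isomorphism. In families (1) and (2) one has $S_n = \{(x,y):(xy^{-1})^n = 1\}$, while in family (3) one has $S_n = \{(x,y):(yx^{-1})^{n+1} = 1\}$; these sets have strictly different sizes for distinct $n$ whenever $G$ contains a sufficiently varied spectrum of element orders. The main obstacle is precisely that an oriented singquandle isomorphism need only preserve conjugation and the $R_i$, not the full group multiplication, so the automorphism group of the underlying quandle can be strictly larger than $\mathrm{Aut}(G)$; the strategy above sidesteps this by packaging every distinguishing feature (the subset $S_n$, the centrality of $\phi(1)$, and the commutator identity above) as a canonical invariant of the oriented singquandle itself rather than of the ambient group.
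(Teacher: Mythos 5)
Your proposal for separating families (1) and (2) contains a genuine error, and this is exactly the hardest part of the proposition since the two families coincide whenever $x$ and $y$ commute. You claim that $[R_1(x,y)x^{-1},x]=1$ holds universally in family (2) and fails in family (1). Writing $w=(xy^{-1})^{n}$, family (2) gives $R_1(x,y)x^{-1}=w$ and family (1) gives $R_1(x,y)x^{-1}=xwx^{-1}$; but $xwx^{-1}$ commutes with $x$ if and only if $w$ does, so the proposed identity is \emph{equivalent} in the two families (namely to $x$ commuting with $(xy^{-1})^{n}$) and in particular is not universal in family (2) -- take $x,y\in S_3$ with $xy^{-1}$ a $3$-cycle not commuting with $x$. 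Your other invariant, the fixed-point set $S_n=\{(x,y): R_1(x,y)=x\}$, is also identical for families (1) and (2) (both equal $\{(x,y):(xy^{-1})^{n}=1\}$), so nothing in the proposal distinguishes these two families. There is also a secondary soundness issue: the element $R_1(x,y)x^{-1}$ is built from the group multiplication, which a singquandle isomorphism need not preserve; the condition can be rescued as $R_1(x,y)*x=R_1(x,y)$, but as noted this does not separate the families anyway.

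The paper takes a different and cleaner route to non-isomorphism: it computes the coloring set of a fixed singular Hopf link, namely $\{(x,y): x=R_1(y,y^{-1}xy),\ y=R_2(y,y^{-1}xy)\}$, which an isomorphism must carry bijectively to the corresponding set for the target structure. This yields $(x^{-1}y)^{n+1}=1$ for family (1), $x^{-1}(x^{-1}y)^{n}y=1$ for family (2), and $(y^{-1}x)^{n}=1$ for family (3) -- genuinely different equations, with the conjugation by $x$ surviving in case (2) precisely because the diagram feeds $R_1$ and $R_2$ the conjugated inputs $(y,y^{-1}xy)$. Your ideas for separating (3) from (1)--(2) and for separating different values of $n$ (the sets $S_n$, the centrality of $\phi(1)$) are reasonable in spirit, though they share with the paper the need to check that the relevant cardinalities actually differ in the chosen group $G$. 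To repair your argument you would need a new invariant sensitive to the difference between $x(xy^{-1})^{n}$ and $(xy^{-1})^{n}x$ on non-commuting inputs; evaluating the operations on conjugated arguments, as the link coloring effectively does, is one way to produce such an invariant.
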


\begin{proof}
A direct computation shows that $R_1$ and $R_2$ satisfy the five axioms of Definition ~\ref{oriented SingQdle}.
\begin{figure}[H]
\tiny{
  \centering
   {\includegraphics[scale=0.7]{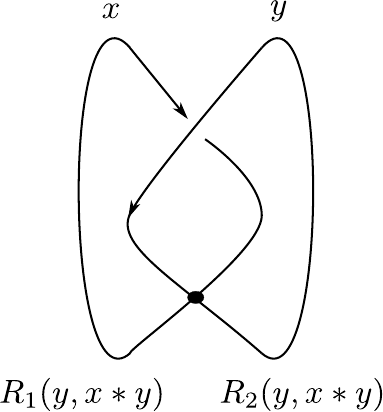}
  }\vspace{.2in}
     \caption{An oriented Hopf link}
  \label{SINGULAR_HOPF}}
\end{figure}

To see that the three solutions are pairwise non-isomorphic singquandles, we compute the set of colorings of a singular knot by each of the three solutions. Consider the singular link given in figure~\ref{SINGULAR_HOPF} and color the top arcs by elements $x$ and $y$ of $G$. Then it is easy to see that the set of colorings is given by
\begin{equation}
Col_X(L)=\{(x,y) \in  G\times G \;|\; x=R_1(y,y^{-1}xy), \; y=R_2(y,y^{-1}xy) \}.
\end{equation}
Using $R_1(x,y)=x(xy^{-1})^{n}$ and $R_2(x,y)=y(x^{-1}y)^n$ from solution (1), the set of colorings becomes: 
\begin{equation}
Col_X(L)=\{(x,y) \in  G\times G \;|\; (x^{-1}y)^{n+1}=1\},
\end{equation}
while the set of colorings of the link $L$ with $R_1(x,y)=(xy^{-1})^nx$ and 
$R_2(x,y)=(x^{-1}y)^ny$ from solution (2) is:
\begin{equation}
Col_X(L)=\{(x,y) \in  G\times G \;|\; x^{-1}(x^{-1}y)^ny=1\}.
\end{equation}
Finally, the set of colorings of the same link with $R_1(x,y)=x(yx^{-1})^{n+1}$ and 
$R_2(x,y)=x(y^{-1}x)^n$ from solution (3) is:
\begin{equation}
Col_X(L)=\{(x,y) \in  G\times G \;|\; (y^{-1}x)^n=1\}.
\end{equation}

This allows us to conclude that solutions (1), (2), and (3) are pairwise non-isomorphic oriented singquandles. In fact these computations also give that different values of $n$ in any of the three solutions (1), (2) and (3) give non-isomorphic oriented singquandles. We exclude the case of $n = 0$ as solutions (1) and (2) become equivalent.

\end{proof}

\section{A generating set of oriented singular Reidemeister moves}
\label{reidemeistermoves}

The purpose of this section is to give a generating set of oriented singular Reidemeister moves. We used this generating set to write the axioms of our singquandle structure in Section \ref{3}.

Minimal generating sets for oriented Reidemeister moves have been studied previously by Polyak in \cite{Polyak}, where he proved that $4$ moves are sufficient to generate the set of all moves for classical knots. However, analogous work for singular knot theory seems to be missing from the literature. 

It was proven in \cite{Polyak} that the moves in Figure \ref{minimal generating set of R moves} constitute a minimal generating set of Reidemeister moves on oriented classical knots. For convenience we will use in our paper the same $\Omega$ notation used by Polyak \cite{Polyak}. In this paper we will use certain moves utilized by Polyak in his paper. These moves are oriented Reidemeister moves of type 1 and type 2, see Figure \ref{oriented_R}. 
\begin{figure}[H]
	\begin{center}
	\includegraphics[scale=0.750]{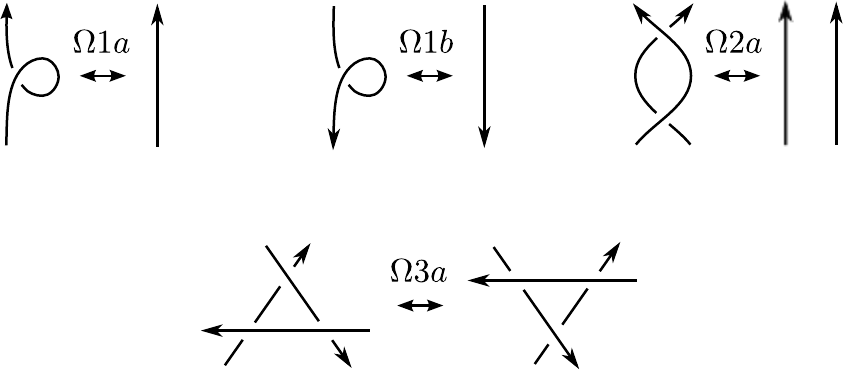}
    \vspace{0.2in}
    \caption{A generating set of Reidemeister moves for oriented knots.}
    \label{minimal generating set of R moves}
	\end{center}
\end{figure}

\begin{figure}[H]
	\begin{center}
	\includegraphics[scale=0.30]{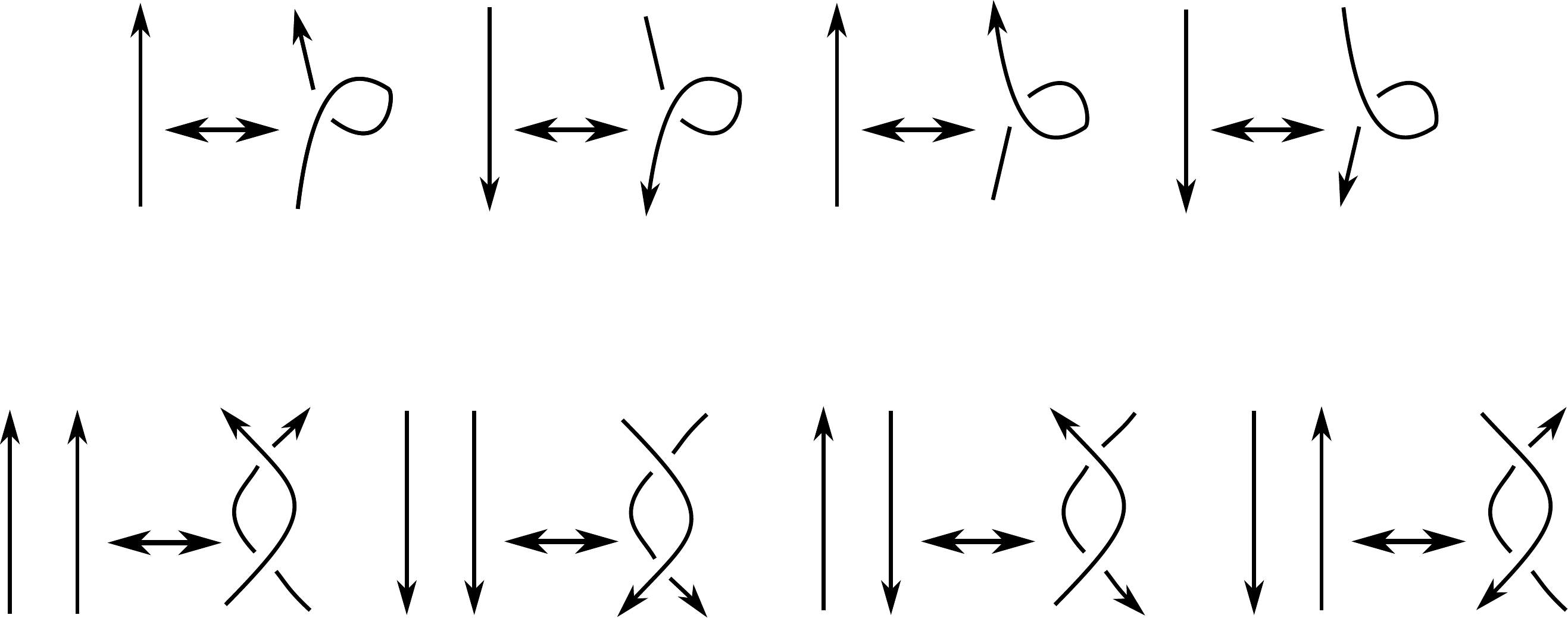}
     \put(-335,130){$\Omega1a$}
   	 \put(-250,130){$\Omega1b$}
   	 \put(-165,130){$\Omega1c$}
      \put(-80,130){$\Omega1d$}
     \put(-350,30){$\Omega2a$}
   	 \put(-255,30){$\Omega2b$}
   	 \put(-150,30){$\Omega2c$}
      \put(-50,30){$\Omega2d$}      
    \caption{Oriented Reidemeister moves of type 1 and type 2.}
    \label{oriented_R}
	\end{center}
\end{figure}

To obtain a generating set of oriented singular Reidemeister moves we enumerate all possible such moves and show that they all can be obtained by a finite sequence of the moves given in Figure \ref{generatingset1123}. Recall first that in the nonoriented case of singular knots, we have the four moves given in Figure \ref{nonoriented_here}.

\begin{figure}[h]

	\begin{center}
	\includegraphics[scale=0.80]{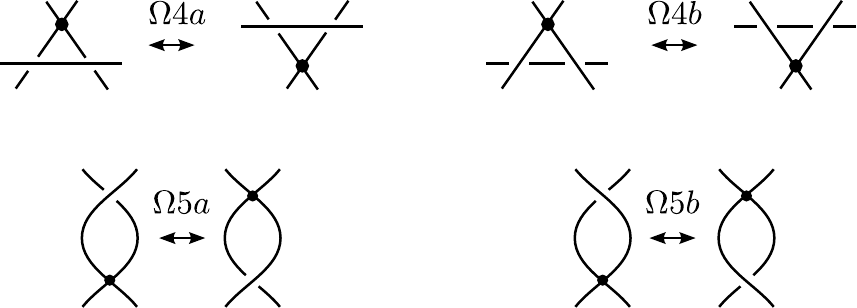}
    \vspace{0.2in}
    \caption{All four possible moves on nonoriented singular knots that involve 				 singular crossings.}
    \label{nonoriented_here}
	\end{center}
\end{figure}
If we consider the oriented case, we can easily compute the maximum number of Reidemeister moves to consider. For $\Omega 4$ moves we have 8 independent orientations, and for $\Omega 5$ we will have 6. While a combinatorically driven approach yields a handful more, it is easily seen that some moves are simply the rotation of other moves, thus they are not considered. The fourteen oriented versions for $\Omega 4$ and $\Omega 5$ moves are tabulated in Figure \ref{allmoves} for reference.

\begin{figure}[h]

	\begin{center}
	\includegraphics[scale=0.8]{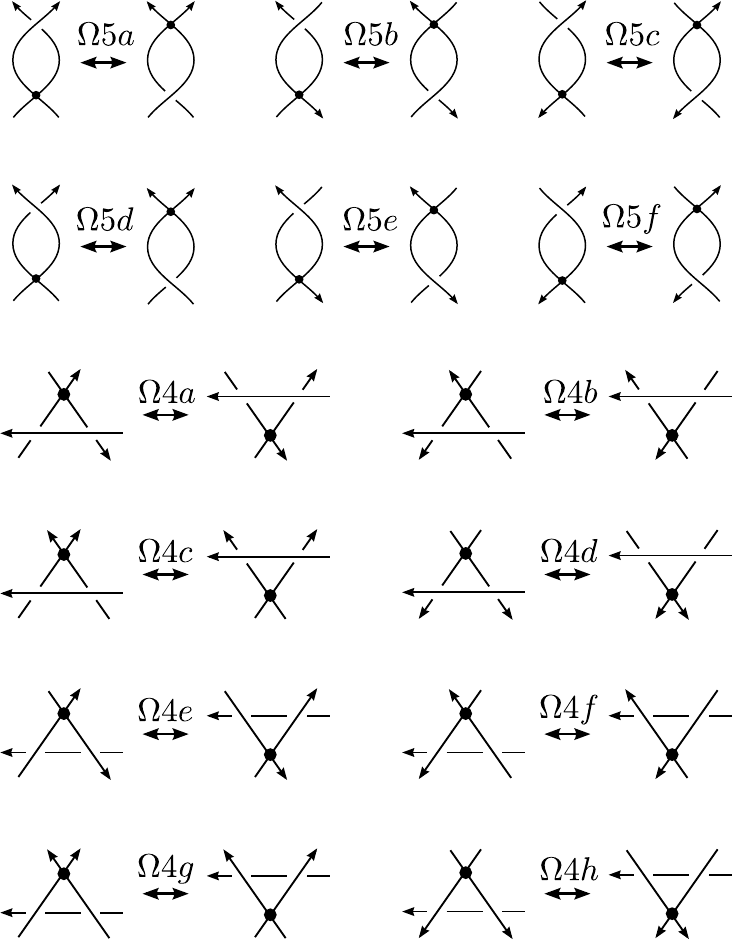}
    \vspace{0.2in}
    \caption{All fourteen oriented moves involving singular crossings.}
        \label{allmoves}
	\end{center}

\end{figure}

\begin{theorem}
Only three oriented singular Reidemeister moves are required to generate the entire set of $\Omega4$, and $\Omega5$ moves. These three moves are $\Omega4a$, $\Omega4e$, and $\Omega5a$.
\end{theorem}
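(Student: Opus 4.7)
The plan is to take Polyak's four-move generating set for oriented classical Reidemeister moves as given, so that any classical oriented move is already available, and then derive each of the remaining $11$ singular moves (six of the eight $\Omega 4$ type, and five of the six $\Omega 5$ type) from $\Omega 4a$, $\Omega 4e$, $\Omega 5a$ together with the classical moves. The workflow for each case is the standard one for this kind of result: start with the diagram on one side of the target move, use classical $\Omega 1$ and $\Omega 2$ moves to insert cancelling pairs of crossings near the singular vertex so that the local picture matches the left-hand side of one of the three generators, apply that generator, and then use further classical moves to remove the cancelling pairs and arrive at the right-hand side of the target move.

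For the $\Omega 4$ family I would organize the six non-generating variants by the combinatorial features that distinguish them from $\Omega 4a$ and $\Omega 4e$: namely (i) the orientation of the ``passing'' strand and (ii) which of the two strands at the singular crossing it passes over. A typical reduction uses a classical $\Omega 2$ (together, when needed, with $\Omega 3$) on the passing strand to locally swap its over/under relation with both strands at the singular vertex, which converts an ``$\Omega 4$-with-opposite-over/under'' picture into a picture where the generator $\Omega 4a$ or $\Omega 4e$ directly applies; reversed-orientation cases are handled by first performing a classical oriented $\Omega 2$ that reverses the relevant strand locally in the sense of Polyak's generating moves. I would present each derivation as a short diagram chain, grouping together the variants that follow from a single such manipulation.

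For the $\Omega 5$ family (six moves, tabulated in Figure \ref{allmoves}), the same template works with $\Omega 5a$ as the only singular input. Here the passing strand is a full arc crossing the singular vertex, so the extra freedom is its orientation and its over/under status with respect to each of the two non-singular strands incident to the vertex. Each of the five non-generators differs from $\Omega 5a$ by some combination of these data, and each combination can be installed on one side of the move by first applying a classical $\Omega 2$ (and possibly an $\Omega 3$) to flip the corresponding feature, invoking $\Omega 5a$, and then using classical moves to clean up.

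The main obstacle is not conceptual but bookkeeping: the argument is a case analysis with eleven sub-cases, and each one requires an explicit sequence of diagrams whose correctness must be verified move by move, with particular care that all orientation arrows are consistent throughout the reduction (this is precisely the point where naive ``reflect the move'' shortcuts available in the unoriented setting fail). A useful uniformizing device is to prove first two small auxiliary lemmas: (a) a ``strand reversal lemma'' showing that in the presence of the classical generators, reversing the orientation of the passing strand across a singular vertex can always be effected by classical moves, and (b) an ``over/under swap lemma'' doing the same for the over/under choice of the passing strand. With these two reductions in hand, the eleven cases collapse into essentially two — one for $\Omega 4$ and one for $\Omega 5$ — and the theorem follows.
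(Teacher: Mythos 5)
Your overall strategy --- keep every classical oriented move available via Polyak's generating set and reduce each remaining singular variant to $\Omega4a$, $\Omega4e$, or $\Omega5a$ by conjugating with classical moves --- is exactly the paper's strategy. The paper carries it out as a case-by-case diagrammatic verification: $\Omega4b$, $\Omega4c$, $\Omega4d$ are each sandwiched between explicit $\Omega2$ moves and reduced to $\Omega4a$, while $\Omega4f$, $\Omega4g$, $\Omega4h$ are reduced the same way to $\Omega4e$; for the $\Omega5$ family the paper conjugates by $\Omega1$ kinks and then uses the $\Omega4$ generators to slide the resulting crossing past the singular vertex, routing $\Omega5b$ and $\Omega5c$ through an intermediate move $\Omega5d$ which is finally reduced to $\Omega5a$ by $\Omega2$ moves alone. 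Your first two paragraphs describe essentially this proof; one detail you did not anticipate is that the $\Omega5$ reductions genuinely need the $\Omega4$ generators (not just classical $\Omega2$/$\Omega3$ moves), since a rigid $4$-valent vertex cannot be rotated relative to the passing strand without pushing a crossing through it.

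The weak point is your proposed uniformizing device. A general ``strand reversal lemma'' --- that the orientation of the passing strand can always be flipped using only classical moves --- cannot hold in the generality you state: if it did, combined with your over/under swap lemma it would collapse all eight $\Omega4$ variants onto a single generator and make $\Omega4e$ redundant, whereas the paper's reductions split the $\Omega4$ family into two separate orbits, one around $\Omega4a$ and one around $\Omega4e$. This is the same phenomenon Polyak isolates in the classical setting, where orientation variants of $\Omega3$ are not interderivable modulo $\Omega1$ and $\Omega2$; the detour argument that would prove such a reversal lemma typically invokes the very move being derived in another orientation, so circularity is a real danger. The over/under swap by $\Omega2$ conjugation (your lemma (b)) is sound and is precisely the paper's mechanism within each orbit, but the case analysis cannot be collapsed to two cases; the safe route is the explicit eleven-case diagram chase that the paper performs.
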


To show each move's dependence on this generating set, we will need to invoke Reidemeister moves of type $\Omega1$ and $\Omega2$ while performing transformations. The specific moves are given in \cite{Polyak}. To prove this theorem, we will first show that the moves of type $\Omega4$ are generated by two unique moves. We will formulate this as a separate theorem:

\begin{theorem}
Only two oriented singular Reidemeister moves of type $\Omega4$ are required to generate all type $\Omega4$ moves. These moves are $\Omega4a$ and $\Omega4e$.
\end{theorem}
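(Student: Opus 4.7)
The plan is to verify this by a case analysis: there are $8$ oriented $\Omega 4$ moves (call them $\Omega 4a,\dots, \Omega 4h$), so I must show that each of the remaining $6$ can be realized as a finite diagrammatic sequence that uses only $\Omega 4a$, $\Omega 4e$, and classical oriented Reidemeister moves ($\Omega 1$, $\Omega 2$ and their variants as enumerated by Polyak). First I would fix a labeling of the $8$ $\Omega 4$ variants according to the two binary choices that distinguish them: (i) whether the non-singular strand passes \emph{over} or \emph{under} the singular crossing, and (ii) the orientation of that strand relative to the two strands meeting at the singular vertex. Grouping the moves along these axes makes it clear that $\Omega 4a$ and $\Omega 4e$ are representatives of the ``over'' and ``under'' classes respectively, while the remaining moves differ from them only by flipping one or both orientations on the transverse strand.

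The main tool is the standard \emph{detour trick}. For each of the six target moves, I would start from its left-hand side and insert a cancelling pair of classical crossings via a suitable oriented $\Omega 2$ move along the transverse strand on the side of the singular vertex that is obstructing the direct application of a generator. After the insertion, one of the two new crossings, together with the existing non-singular crossing, forms the precise local picture needed to apply $\Omega 4a$ or $\Omega 4e$; performing that move slides the transverse strand across the singular vertex. A second $\Omega 2$ (together with perhaps an $\Omega 1$ or $\Omega 3$ to tidy up) then removes the extraneous crossing on the opposite side, yielding exactly the right-hand side of the target move. I would present this as one labeled figure per target move, since the equivalence is genuinely pictorial.

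The main obstacle is bookkeeping: ensuring that the orientations of all inserted crossings are compatible and that no unintended sign change has been introduced when the transverse strand is dragged past the singular point. In particular the two classes (over vs.\ under) cannot be interchanged without reflecting the whole diagram, so the argument must genuinely produce both $\Omega 4a$ and $\Omega 4e$ as seeds; attempting to derive, say, an ``under'' variant from $\Omega 4a$ alone will fail, and checking that no such spurious derivation slips in is essential for confirming the count. I would therefore lay the eight moves out in a $2\times 4$ table (over/under $\times$ four orientation patterns) and carry out the derivations within each row, showing that within the ``over'' row $\Omega 4a$ suffices and within the ``under'' row $\Omega 4e$ suffices.

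Finally, to confirm that no further reductions are possible (i.e.\ that two moves are necessary, not one), I would note the argument above: reflection of a singular diagram to exchange ``over'' with ``under'' is not a Reidemeister move on oriented singular diagrams, so the two rows of the table cannot be merged. This gives both the upper bound of $2$ generators (from the constructions) and the lower bound (from the over/under dichotomy), completing the proof of the theorem.
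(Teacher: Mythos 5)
Your main construction is essentially the paper's proof: the paper disposes of the six remaining $\Omega4$ variants one lemma at a time, each by exactly the detour trick you describe --- conjugating by an oriented $\Omega2$ pair so that the transverse strand can be slid past the singular vertex using $\Omega4a$ (for the moves $\Omega4b$, $\Omega4c$, $\Omega4d$) or $\Omega4e$ (for $\Omega4f$, $\Omega4g$, $\Omega4h$), with one labeled figure per move. Your $2\times 4$ over/under table matches the paper's implicit grouping, so on the sufficiency side the two arguments coincide.

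One caution on your final paragraph: the paper does not prove minimality of $\{\Omega4a,\Omega4e\}$ --- it only establishes that these two moves generate, and explicitly lists proving minimality of generating sets as an open question. Your proposed lower-bound argument (``reflection exchanging over with under is not a Reidemeister move, so the two rows cannot be merged'') is not a proof of independence: the fact that no single symmetry carries $\Omega4a$ to $\Omega4e$ does not rule out a longer sequence of $\Omega1$, $\Omega2$, $\Omega3$, and $\Omega4a$ moves realizing $\Omega4e$. Establishing independence would require constructing an invariant of diagrams preserved by all the other moves but changed by $\Omega4e$, in the style of Polyak's proof for the classical generating set. If you keep the minimality claim, you need that extra ingredient; otherwise restrict the statement to sufficiency, as the paper does.
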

The proof of this theorem is given in Lemmas~\ref{lemma5.3} through ~\ref{lemma5.8}. 
\begin{lemma}\label{lemma5.3}
The move $\Omega4c$ is equivalent to $\Omega2c\bigcup\Omega4a\bigcup\Omega2d$.
\end{lemma}
\begin{proof}
\quad

\begin{figure}[H]
	\begin{center}
	\includegraphics[scale=0.65]{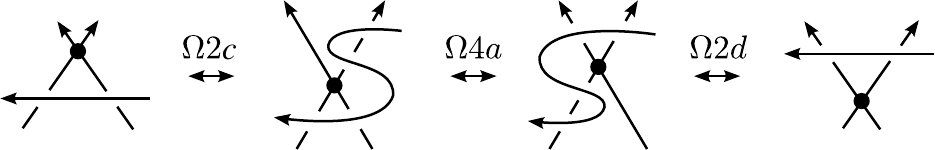}
	\end{center}
\end{figure}
\end{proof}
\begin{lemma}
\label{Omega4d}
The move $\Omega4d$ is equivalent to $\Omega2c\bigcup\Omega4a\bigcup\Omega2d$.
\end{lemma}
\begin{proof}
\quad

\begin{figure}[H]
	\begin{center}
	\includegraphics[scale=0.65]{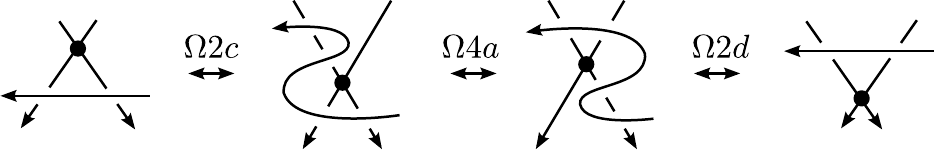}
	\end{center}
\end{figure}
\end{proof}
\begin{lemma}
The move $\Omega4g$ is equivalent to $\Omega2c\bigcup\Omega4e\bigcup\Omega2d$.
\end{lemma}
\begin{proof}
\quad

\begin{figure}[H]
	\begin{center}
	\includegraphics[scale=0.65]{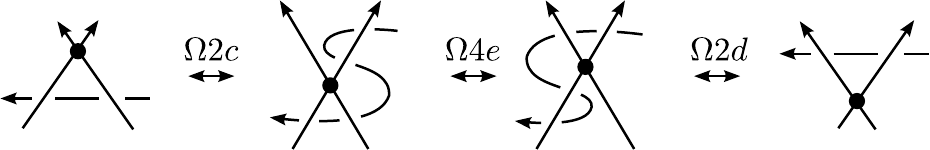}
	\end{center}
\end{figure}
\end{proof}
\begin{lemma}
\label{Omega4h}
The move $\Omega4h$ is equivalent to $\Omega2c\bigcup\Omega4e\bigcup\Omega2d$.
\end{lemma}
\begin{proof}
\quad

\begin{figure}[H]
	\begin{center}
	\includegraphics[scale=0.65]{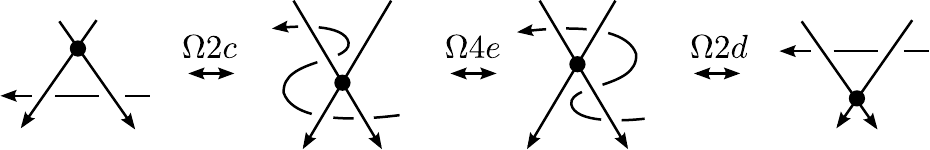}
	\end{center}
\end{figure}
\end{proof}
\begin{lemma}
The move $\Omega4b$ is equivalent to $\Omega2a\bigcup\Omega2c\bigcup\Omega4a\bigcup\Omega2d\bigcup\Omega2b$.
\end{lemma}
\begin{proof}
\quad

\begin{figure}[H]
	\begin{center}
	\includegraphics[scale=0.65]{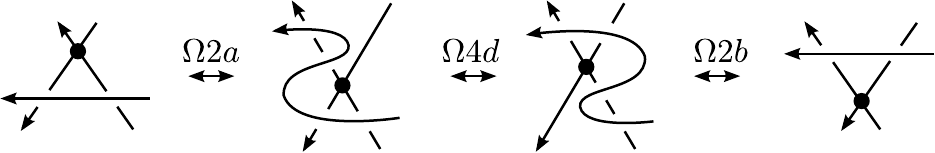}
	\end{center}
\end{figure}

By applying Lemma \ref{Omega4d} we see that the move $\Omega4d$ reduces further, and the lemma follows.
\end{proof}

\begin{lemma}\label{lemma5.8}
The move $\Omega4f$ is equivalent to $\Omega2b\bigcup\Omega2c\bigcup\Omega4e\bigcup\Omega2d\bigcup\Omega2a$.
\end{lemma}
\begin{proof}
\quad

\begin{figure}[H]
	\begin{center}
	\includegraphics[scale=0.65]{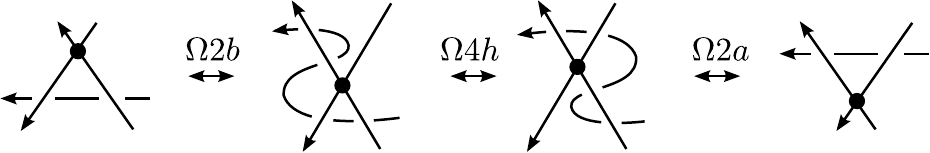}
	\end{center}
\end{figure}
By applying Lemma \ref{Omega4h} we see that the move $\Omega4h$ reduces further, and the lemma follows.
\end{proof}

From here it remains to show that all moves of type $\Omega5$ can be generated using only the $\Omega5a$ move. We formulate this as a theorem:

\begin{theorem}\label{omega5}
Only one oriented singular Reidemeister move of type $\Omega5$ is required to generate all type $\Omega5$ moves.
\end{theorem}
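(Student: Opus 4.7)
The plan is to prove Theorem \ref{omega5} by the same case-by-case strategy that was used in the preceding lemmas for the $\Omega4$ moves. First I would enumerate the six oriented singular $\Omega5$ moves (call them $\Omega5a,\ldots,\Omega5f$) as tabulated in Figure \ref{allmoves}. Taking $\Omega5a$ as the designated generator, I would then produce, for each of $\Omega5b,\Omega5c,\Omega5d,\Omega5e,\Omega5f$ in turn, a separate lemma exhibiting an explicit sequence of $\Omega1$ and $\Omega2$ moves from Polyak's set, together with $\Omega5a$ moves, that realizes it. If the manipulations near the singular vertex require sliding a non-singular crossing past a singular one, the previously established generating subset $\{\Omega4a,\Omega4e\}$ (and the $\Omega4$-reduction lemmas above) would be invoked as needed.

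The core technique, exactly as in the $\Omega4$ proofs above, is to bracket the desired local move by $\Omega2$-insertions. Given an $\Omega5x$ whose orientation pattern around the singular vertex disagrees with that of $\Omega5a$ on one strand, I would insert a cancelling pair of crossings on that strand via an $\Omega2$ move; this effectively reverses the orientation on a short sub-arc and converts the local picture into one on which $\Omega5a$ applies directly. After applying $\Omega5a$, the inserted crossings are cancelled by a second $\Omega2$ move, possibly after a classical $\Omega3$ slide (and, if the slide crosses the singular vertex, an already-reduced $\Omega4$ move) to bring them back into cancelling position. This yields the required resulting diagram of $\Omega5x$, and the argument is naturally presented as a sequence of small sub-figures of the same style as in Lemmas 5.1--5.6.

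The main obstacle I anticipate is the case in which two strands have the wrong orientation relative to $\Omega5a$, requiring two independent $\Omega2$-insertions and a more delicate sequence of slides to preserve the coherence of the diagram through the singular vertex; these will be the $\Omega5$-analogues of $\Omega4b$ and $\Omega4f$ and presumably need the longest chain of auxiliary moves. One must also be careful that the $\Omega5$ move invoked inside each reduction is only $\Omega5a$, since appealing to any other $\Omega5$ variant would render the argument circular. A final bookkeeping issue is to check at each intermediate stage that the diagram remains a legal oriented singular diagram, so that the cited Reidemeister moves really apply in the form needed. Granting these verifications, Theorem \ref{omega5} follows immediately by combining the five resulting lemmas, and together with the preceding theorem reducing $\Omega4$ to $\{\Omega4a,\Omega4e\}$ this completes the proof that $\{\Omega4a,\Omega4e,\Omega5a\}$ generates all oriented singular Reidemeister moves.
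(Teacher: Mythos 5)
Your proposal follows essentially the same strategy as the paper: a case-by-case reduction of each remaining $\Omega5$ variant to $\Omega5a$ by conjugating with classical $\Omega1$/$\Omega2$ moves and sliding the auxiliary crossings through the singular vertex via the already-established generators $\Omega4a$ and $\Omega4e$. The only difference is bookkeeping: the paper first reduces $\Omega5b$ and $\Omega5c$ to $\Omega5d$ (and $\Omega5e$, $\Omega5f$ to $\Omega5a$) using $\Omega1$-kinks pushed through the singular crossing by $\Omega4$ moves, and then eliminates $\Omega5d$ itself with $\Omega2$ moves and a single $\Omega5a$, rather than reducing all five cases directly to $\Omega5a$.
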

The proof of Theorem~\ref{omega5} is a consequence of the Lemmas~\ref{lem5.10} through~\ref{lem5.14} given below.

\begin{lemma}\label{lem5.10}
The move $\Omega5b$ is equivalent to $\Omega1a\bigcup\Omega4a\bigcup\Omega5d\bigcup\Omega4e\bigcup\Omega1a$.
\end{lemma}
\begin{proof}
\quad
\begin{figure}[H]
	\begin{center}
	\includegraphics[scale=0.65]{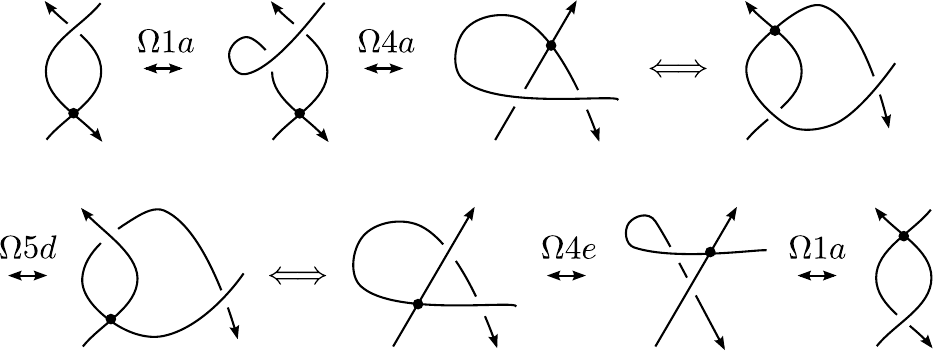}
	\end{center}
\end{figure}
\end{proof}
\begin{lemma}
The move $\Omega5c$ is equivalent to $\Omega1b\bigcup\Omega4a\bigcup\Omega5d\bigcup\Omega4e\bigcup\Omega1b$.
\end{lemma}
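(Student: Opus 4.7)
The plan is to construct an explicit sequence of singular link diagrams that realizes $\Omega 5c$ as a concatenation of the five moves listed, in direct parallel with the proof of the preceding lemma for $\Omega 5b$. Since the figure for $\Omega 5b$ already exhibits the scheme ``add a kink, slide the singular crossing across it, perform $\Omega 5d$, slide back, remove the kink'', the task is to repeat this scheme with the twist of the kink reversed.

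First I would start from the left-hand side of $\Omega 5c$ and apply an $\Omega 1b$ move on the strand that passes over (respectively under) the two regular crossings of the move, introducing a small kink whose sign is opposite to the one used in the $\Omega 5b$ argument. Second, I would use $\Omega 4a$ to push the singular vertex across one of the two regular crossings created by the kink, putting the diagram into the standard position in which $\Omega 5d$ can be applied. Third, I apply $\Omega 5d$ itself to perform the essential exchange past the singular vertex. Fourth, I undo the intermediate rearrangement by applying $\Omega 4e$, which slides the singular vertex back over the partner crossing of the kink. Finally, a second $\Omega 1b$ move removes the remaining kink and produces the right-hand side of $\Omega 5c$. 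The resulting picture is essentially the mirror of the figure used to prove the previous lemma, with the two outer moves being $\Omega 1b$ instead of $\Omega 1a$.

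The only substantive check is bookkeeping: one must verify that at each stage the orientations and crossing signs of the local tangle agree with those in the statement of the move being invoked. This is where the choice of $\Omega 1b$ over $\Omega 1a$ is forced: the two regular crossings produced by the kink must have the sign pattern that makes the interior moves literally $\Omega 4a$, $\Omega 5d$, and $\Omega 4e$, not their mirrors. Since $\Omega 5c$ differs from $\Omega 5b$ precisely by the reversal of the over/under information on the non-singular strand, swapping $\Omega 1a$ for $\Omega 1b$ in the outer positions restores exactly the right local configurations for the inner three moves. The main obstacle, such as it is, lies in drawing this sequence carefully; once the diagram analogous to that for $\Omega 5b$ is produced, the proof is immediate by inspection, and I would present it via a figure in the same style as the preceding lemmas.

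\begin{proof}
\quad
\begin{figure}[H]
	\begin{center}
	\includegraphics[scale=0.65]{O5c_elimination.pdf}
	\end{center}
\end{figure}
\end{proof}
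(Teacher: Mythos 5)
Your proposal follows exactly the paper's argument: the paper's proof is precisely the diagrammatic sequence you describe — introduce a kink with $\Omega 1b$, slide the singular vertex through via $\Omega 4a$, apply $\Omega 5d$, slide back with $\Omega 4e$, and remove the kink with a second $\Omega 1b$ — presented as a single figure in parallel with the $\Omega 5b$ case. Your verbal account of the bookkeeping (why $\Omega 1b$ rather than $\Omega 1a$ is forced by the reversed over/under data) is a faithful narration of what the paper's picture shows.
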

\begin{proof}
\quad

\begin{figure}[H]
	\begin{center}
	\includegraphics[scale=0.65]{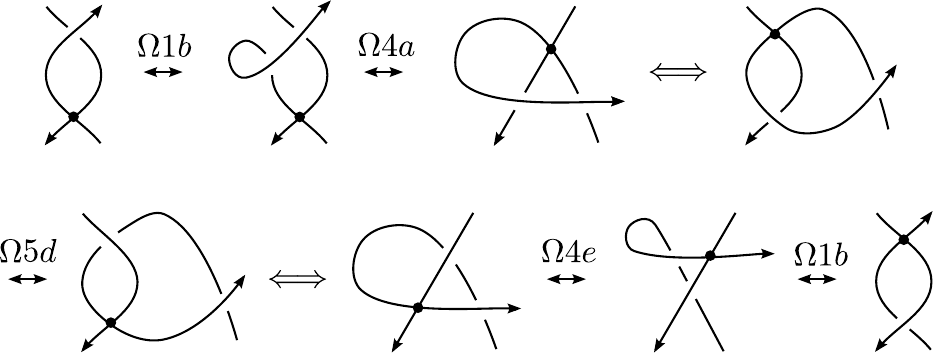}
	\end{center}
\end{figure}
\end{proof}
\begin{lemma}
The move $\Omega5e$ is equivalent to $\Omega1c\bigcup\Omega4e\bigcup\Omega5a\bigcup\Omega4a\bigcup\Omega1c$.
\end{lemma}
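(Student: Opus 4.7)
The approach is to follow exactly the template used for $\Omega 5b$ and $\Omega 5c$: introduce a suitable curl via $\Omega 1c$ adjacent to the singular vertex so that the resulting local picture contains a configuration to which the generating move $\Omega 5a$ applies; slide the new classical crossing across the singular vertex using $\Omega 4$ moves; apply $\Omega 5a$; then slide a crossing back and undo the curl. The move $\Omega 5e$ differs from $\Omega 5a$ essentially by the orientation of the strand passing by the double point, so a single well-placed curl should reverse the relevant arc (locally) and make $\Omega 5a$ available.

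The sequence I would draw, matching the order $\Omega 1c,\ \Omega 4e,\ \Omega 5a,\ \Omega 4a,\ \Omega 1c$ declared in the statement, is as follows. Starting from the left-hand side of $\Omega 5e$, first apply $\Omega 1c$ to the strand whose orientation obstructs direct application of $\Omega 5a$, producing a small kink. Next, use $\Omega 4e$ to pass one of the two crossings of this kink across the singular point; the resulting diagram has the singular vertex sandwiched between a classical crossing and the (now relocated) partner crossing in exactly the configuration of the left-hand side of $\Omega 5a$. Apply $\Omega 5a$ to rewrite this sub-diagram. Then pull the displaced classical crossing back across the singular vertex by an $\Omega 4a$ (the $\Omega 4e$ and $\Omega 4a$ together act as the ``conjugation'' of $\Omega 5a$ by a kink). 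Finally, apply $\Omega 1c$ again to eliminate the leftover curl; the resulting diagram is the right-hand side of $\Omega 5e$.

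The main obstacle, as in the previous lemmas, is bookkeeping: one must verify that the curl introduced by the first $\Omega 1c$ has the correct sign and orientation so that the two crossings produced can legitimately be transported through the singular vertex by $\Omega 4e$ and $\Omega 4a$ as catalogued in Figure \ref{allmoves}, and that the intermediate diagram really is an instance of the left-hand side of $\Omega 5a$ rather than a rotated or mirrored variant. Once the orientations are drawn carefully, checking the five pictures is mechanical and completely parallel to the proofs given for $\Omega 5b$ and $\Omega 5c$. The proof itself would therefore consist of a single figure exhibiting these five successive diagrams, in the same style as the preceding lemmas of this section.
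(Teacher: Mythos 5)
Your proposal matches the paper's proof, which consists precisely of a figure exhibiting the five successive diagrams you describe: an $\Omega 1c$ kink, an $\Omega 4e$ transport of a crossing across the singular vertex, an application of $\Omega 5a$, an $\Omega 4a$ transport back, and a final $\Omega 1c$ to remove the curl. The approach and the order of moves are identical to the paper's, so no further comparison is needed.
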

\begin{proof}
\quad

\begin{figure}[H]
	\begin{center}
	\includegraphics[scale=0.65]{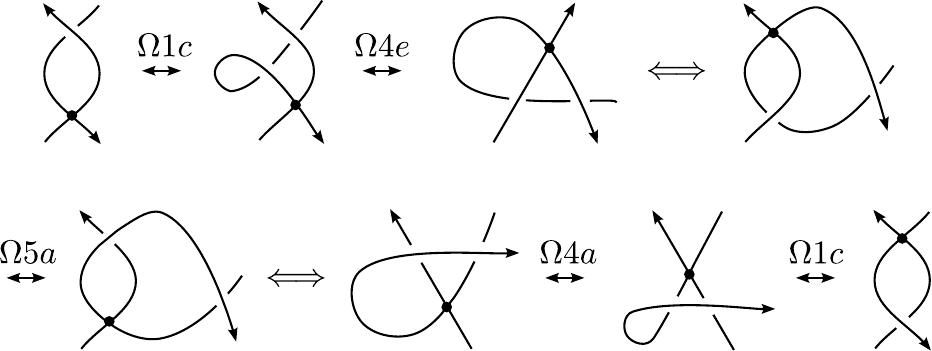}
	\end{center}
\end{figure}
\end{proof}
\begin{lemma}
The move $\Omega5f$ is equivalent to $\Omega1d\bigcup\Omega4e\bigcup\Omega5a\bigcup\Omega4a\bigcup\Omega1d$.
\end{lemma}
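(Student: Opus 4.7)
The plan is to imitate the strategy used in the preceding three lemmas (for $\Omega 5b$, $\Omega 5c$, and $\Omega 5e$), which all follow the same five-step ``kink-slide-apply-slide-unkink'' template. Concretely, each of those proofs begins with one side of an $\Omega 5$ move, introduces a local kink by an $\Omega 1$ move on a strand adjacent to the singular region, uses an $\Omega 4$ move to drag the resulting crossing through the singular vertex, performs the base $\Omega 5$ move (either $\Omega 5d$ or $\Omega 5a$, depending on the orientation configuration), and then reverses the process with a complementary $\Omega 4$ and $\Omega 1$ on the other side. I will construct the corresponding sequence for $\Omega 5f$.

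First I would draw the left-hand diagram of $\Omega 5f$ and identify which strand's orientation distinguishes it from $\Omega 5e$; this determines that the appropriate kink to introduce is an $\Omega 1d$ (rather than $\Omega 1c$ as used for $\Omega 5e$). Next I would apply $\Omega 1d$ to insert a positive/negative kink on that strand, placing it so that the new classical crossing sits next to the singular crossing. Then I would perform $\Omega 4e$ to push the classical crossing through the singular one, bringing the configuration into the exact local form of one side of $\Omega 5a$. Applying $\Omega 5a$ rearranges the two singular crossings relative to the classical arc, after which an $\Omega 4a$ pushes the classical crossing back through the singular vertex on the opposite side. A final $\Omega 1d$ removes the remaining kink and yields the right-hand diagram of $\Omega 5f$.

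The main obstacle, as in the earlier lemmas, is purely bookkeeping: one must verify at every stage that the orientations on all four semiarcs around each crossing match precisely the orientations prescribed by $\Omega 1d$, $\Omega 4e$, $\Omega 5a$, $\Omega 4a$, $\Omega 1d$ in turn (as catalogued in Figure \ref{allmoves} and in Polyak's oriented conventions). Because $\Omega 5f$ differs from $\Omega 5e$ only by reversing the orientation of one strand, the required orientation reversal propagates uniformly: each $\Omega 1c$ in the proof of Lemma for $\Omega 5e$ is replaced by its orientation-reversed cousin $\Omega 1d$, while the $\Omega 4$ and $\Omega 5$ steps retain the same letters. Thus the correctness reduces to checking that the substitution $\Omega 1c \leftrightarrow \Omega 1d$ is the only change needed, which can be confirmed by directly comparing the two diagrams.

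I would present the proof, following the convention of the preceding lemmas in this section, as a single figure containing the five intermediate diagrams labeled by the move applied at each step, with no prose beyond the ``\quad'' placeholder; the diagram itself constitutes the verification.

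\begin{proof}
\quad

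\begin{figure}[H]
	\begin{center}
	\includegraphics[scale=0.65]{O5f_elimination.pdf}
	\end{center}
\end{figure}
\end{proof}
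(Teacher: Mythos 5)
Your proposal follows exactly the paper's approach: the paper's proof is precisely the five-step kink--slide--apply--slide--unkink sequence $\Omega1d$, $\Omega4e$, $\Omega5a$, $\Omega4a$, $\Omega1d$ presented as a single figure of intermediate diagrams, mirroring the proof for $\Omega5e$ with $\Omega1c$ replaced by $\Omega1d$. Your description of the strategy and the final presentation matches the paper's proof.
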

\begin{proof}
\quad

\begin{figure}[H]
	\begin{center}
	\includegraphics[scale=0.65]{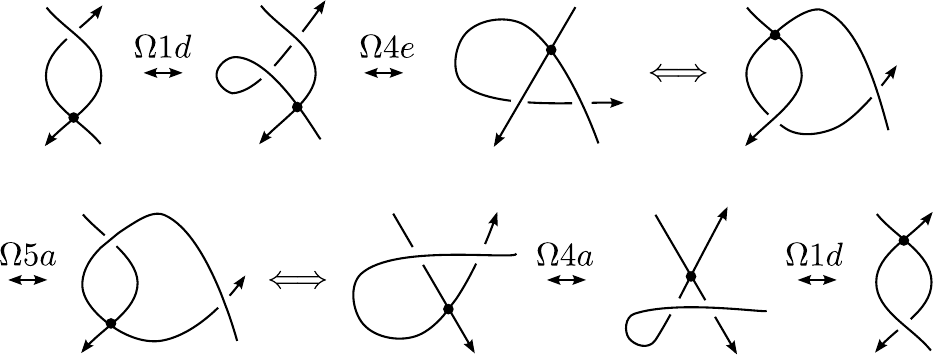}
	\end{center}
\end{figure}
\end{proof}

At this point, all $\Omega5$ moves have been shown to depend on only the moves $\Omega4a$, $\Omega4e$, $\Omega5a$, and $\Omega5d$, along with $\Omega1$ moves. The last step remaining is to eliminate $\Omega5d$.
\begin{lemma}\label{lem5.14}
The move $\Omega5d$ can be realized by a combination of $\Omega2$ moves, and one $\Omega5a$ move.
\end{lemma}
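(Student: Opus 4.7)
The plan is to produce a picture-proof in exactly the same style as the preceding lemmas in this section: start from one side of the $\Omega 5d$ move and deform the diagram, through a sequence of local $\Omega 2$ moves together with a single application of $\Omega 5a$, until we reach the other side. Structurally, the difficulty is that $\Omega 5d$ and $\Omega 5a$ differ only in the orientation of the over/under strand passing the singular crossing, so the task reduces to reversing that strand's local orientation relative to the singular crossing using Reidemeister~$2$ moves.

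Concretely, I would first apply an $\Omega 2$ move on the passing strand, on one side of the singular crossing, so as to create a pair of opposite-sign crossings with an auxiliary arc whose orientation is compatible with the hypothesis of $\Omega 5a$. This has the effect of pushing one of the crossings ``past'' the singular vertex while leaving a small pigtail on the other side. Next, an $\Omega 5a$ move can now be performed on the reconfigured triangle, since the local crossing signs and orientations now match the pattern of $\Omega 5a$ exactly. Finally, a second $\Omega 2$ move (the mirror of the first) removes the auxiliary pigtail, yielding the right-hand side of $\Omega 5d$. The sequence is thus of the schematic form
\[
\Omega 5d \;\equiv\; \Omega 2 \,\cup\, \Omega 5a \,\cup\, \Omega 2,
\]
in complete analogy with the decompositions already used for $\Omega 5b,\Omega 5c,\Omega 5e,\Omega 5f$, but without the need for the $\Omega 1$ moves there, because the two $\Omega 5$ moves being related have the same framing behavior.

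I would present this as a four-frame figure, each frame showing the diagram after one of the three moves, with the singular vertex and the moving strand highlighted so that the reader can check locally that each transition is a valid $\Omega 2$ or $\Omega 5a$. The main obstacle is entirely combinatorial rather than conceptual: one must pick the correct $\Omega 2$ variant (among $\Omega 2a$--$\Omega 2d$) on each side so that the orientations after the $\Omega 2$ actually produce an $\Omega 5a$ configuration and not some other $\Omega 5$ variant (which would be circular). Once the correct orientation bookkeeping is fixed, the verification is immediate from the figure, and combined with the previous lemmas this finishes the proof of Theorem~\ref{omega5} and, together with the $\Omega 4$ reductions, the proof of the main generating-set theorem.
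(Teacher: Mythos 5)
Your proposal is correct and matches the paper's argument: the paper's proof is exactly a figure realizing $\Omega 5d$ as an $\Omega 2$ move introducing a cancelling pair of crossings, one application of $\Omega 5a$ to slide a crossing through the singular vertex, and a final $\Omega 2$ to cancel the leftover pair, with no $\Omega 1$ moves needed. The only thing you leave implicit --- which variants of $\Omega 2$ are used so that the intermediate configuration is genuinely an $\Omega 5a$ pattern --- is precisely what the paper's figure supplies, so your outline is the same proof.
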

\begin{proof}
\quad
\begin{figure}[H]
	\begin{center}
    \includegraphics[scale=0.7]{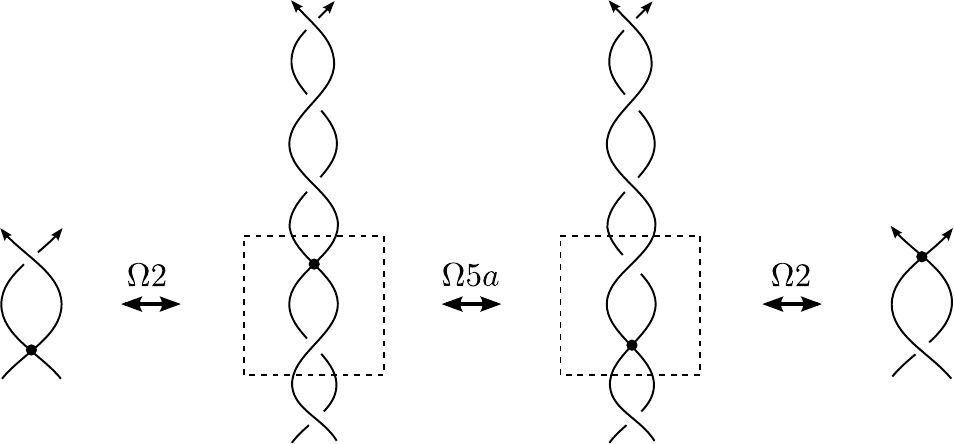}
    \end{center}
\end{figure}
\end{proof}

\begin{remark}
Recently, the problem of a generating set of oriented singular Reidemeister moves was also considered by S. Nelson, N. Oyamaguchi and R. Sazdanovic in \cite{sam} in which an alternative generating set was given.
\end{remark}

\section{Open questions}
The following are some open questions for future research:
\begin{itemize}
\item Find other generating sets of oriented singular Reidemeister moves and prove their minimality.

\item
Define a notion of {\it extensions} of oriented singquandles as in \cite{CENS}.

\item
Define a cohomology theory of oriented singquandles and use low dimensional cocycles to construct invariants that generalize the number of colorings of singular knots by oriented singquandles.

\end{itemize}

\end{document}